\newtheorem{thm}{Theorem}%[section]  %   number by chapter
\newtheorem{lem}[thm]{Lemma}        %   same counter as thm
\newtheorem{cor}[thm]{Corollary}
\newtheorem{prop}[thm]{Proposition}
\newtheorem{mthm}[thm]{Main Theorem}
\newtheorem{conj}[thm]{Conjecture}
\newtheorem{exam}[thm]{Example}
\newtheorem{rem}{{\it Remark}} 
\theoremstyle{plain} % just in case the style had changed
\newcommand{\thistheoremname}{}
\newtheorem*{genericthm*}{\thistheoremname}
\newenvironment{namedthm*}[1]
{\renewcommand{\thistheoremname}{#1}%
	\begin{genericthm*}}
	{\end{genericthm*}}
\begin{document}
	\title[location of reducibility points]{location of reducibility points of induced representations I: A toy example}
   
	\author{CAIHUA LUO}
	\address{department of mathematics \\ bar-ilan university \\ ramat-gan, 5290000\\ israel}
	\email{chluo@amss.ac.cn}
	\date{}
	\subjclass[2010]{11F66, 11F70, 22E35, 22E50}
	\keywords{Intertwining operator, Speh representation, Parabolic induction, Reducibility point}
\maketitle
\begin{abstract}
		By analyzing the singularity of standard intertwining operators, we provide a new way to understand the explicit location of reducibility points of induced representations of two Speh representations for general linear groups over a $p$-adic field. Through playing with this toy example, it seems that the analytic approach, in the spirit of M{\oe}glin--Waldspurger, could play a role for analogous reducibility problems in the setting of classical groups. On the other hand, it also stimulates the arising of some interesting questions.
\end{abstract}

\section{introduction}
  Let $G_n=GL_n(F)$ be the general linear group of rank $n$ defined over a $p$-adic field $F$. Given a unitary supercuspidal representation $\tau$ of $G_m$ and two positive integers $c,~a$, we can attach a Speh representation $\rho_c(\tau_a)$, and it would be interesting to see when the following parabolic induction representation
  \[\rho_c(\tau_a)|det(\cdot)|^s\times \rho_d(\tau_b)|det(\cdot)|^{-s} \]
  is reducible, here $s\in\mathbb{C}$ (Please refer to Section \ref{mainthms} for the notions). Indeed, by analyzing Langlands--Shahidi's normalized intertwining operators, M{\oe}glin--Waldspurger have proved the existence of a ``good'' bounded domain for the location of possible reducibility points, i.e., given by the so-called condition ``li{\'e}s'' in \cite[Lemma I.6.3]{moeglin1989residue}, it seems that was the best result by then. Other than that, one could not say more on the explicit location of reducibility points, even for the basic case $b=1$ or $d=1$ before Tadi{\'c} and Lapid--M{\'{\i}}nguez's recent work (see \cite{tadic2014irreducibility,lapidminguez2016Innerforms}). In the short note, we give a natural interpretation of the condition ``li{\'e}s'' in terms of normalization factors of intertwining operators, which in turn provides more information on the condition for our induced representation to be reducible (see Remark \ref{liescondi}). This is achieved by following closely M{\oe}glin--Waldspurger's argument in a subtle way. By doing so, it gives us a sense that the condition ``li{\'e}s'' might be an ``if and only if'' condition, which does hold provided that a strong version of Main Theorem \ref{mthm1}, i.e., Conjecture \ref{conjnonzero}, is established. Concerning our approach in the spirit of M{\oe}glin--Waldspurger, we can establish a special case of Conjecture \ref{conjnonzero}, see Proposition \ref{conjspecial}, via Cai--Friedberg--Ginzburg--Kaplan's local coefficient theory of $(k,~c)$-model, as opposed to the well-known Shahidi's local coefficient theory of Whittaker model, involved in the profound generalized doubling method (see \cite{cai2018doubling,cai2019doubling}). On the other hand, such an expectation is really true proved recently by Tadi{\'c} and Lapid--M{\'{\i}}nguez via an alternative way, i.e., a detailed analysis of Jacquet modules in \cite{tadic2014irreducibility,lapidminguez2016Innerforms} (see Theorem \ref{redthm}). Indeed, using mainly the Jacquet module tool in a combinatorial way, they have obtained some beautiful criterion of irreducibility for certain induced representations including the toy example investigated in the paper, and proposed some exciting conjectures in a series of papers (see \cite{lapidminguez2018Squareirred,lapidminguez2020Conjecture,lapid2018KazhdanLusztigpolynomial}). We hope our analytic approach could shed some light on the general case.
  
  The structure of the paper is as follows. In the next section, we will first state our Main Theorem \ref{mthm1} and then provide some applications, while its proof is given in the last section.

\section{main theorems}  \label{mainthms}
Let $F$ be a $p$-adic field with absolute value $|\cdot|$, $\mathfrak{w}$ be its uniformizer, $\mathcal{O}$ be its ring of integers, and $\mathbb{F}_q=\mathcal{O}/\mathfrak{w}\mathcal{O}$ be its residual field. For $n\in \mathbb{Z}_+$ the set of positive integers, denote $G_n=GL_n(F)$ to be the general linear group of rank $n$.

Let $\tau$ be a unitary supercuspidal representation of $G_t=GL_t(F)$ with $t|k$, and $\tau_a$ be the unique discrete series subrepresentation of the normalized parabolic induced representation \[\tau|det(\cdot)|^\frac{a-1}{2}\times\tau|det(\cdot)|^{\frac{a-3}{2}}\times\cdots \times \tau|det(\cdot)|^{-\frac{a-1}{2}}:=Ind^{G_k}(\tau|det(\cdot)|^{\frac{a-1}{2}}\otimes\cdots \otimes \tau|det(\cdot)|^{-\frac{a-1}{2}})\]
of $G_k=GL_k(F)$ with $a=\frac{k}{t}$. In particular, $\tau_1=\tau$. Attached to $\tau_a$ and $c\in \mathbb{Z}_+$, we denote $\rho_c(\tau_a)$ to be the associated Speh representation of $G_{kc}=GL_{kc}(F)$, i.e., the unique Langlands quotient of \[\tau_a|det(\cdot)|^{\frac{c-1}{2}}\times\tau_a|det(\cdot)|^{\frac{c-3}{2}}\times \cdots\times \tau_a|det(\cdot)|^{-\frac{c-1}{2}}\]
or the unique subrepresentation of \[\tau_a|det(\cdot)|^{-\frac{c-1}{2}}\times\tau_a|det(\cdot)|^{-\frac{c-3}{2}}\times \cdots\times \tau_a|det(\cdot)|^{\frac{c-1}{2}}.\]
For $\bar{\beta}=(\beta_1,~\beta_2)\in 
\mathbb{Z}_+^2$, let $P_{\bar{\beta}}$ be the standard maximal parabolic subgroup of $G_{\beta_1+\beta_2}$ with Levi factor $M_{\bar{\beta}}=G_{\beta_1}\times G_{\beta_2}$, $\sigma=(12)$ be the permutation such that 
\[\sigma M_{\bar{\beta}}=G_{\beta_2}\times G_{\beta_1}. \]
For irreducible admissible representations $\pi_i$ of $G_{\beta_i}$, $i=1,~2$, and $\bar{s}=(s_1,~s_2)\in \mathbb{C}^2$, we have the non-normalized standard intertwining operator (see \cite{moeglin1989residue})	
\[M(\sigma,~\pi_1\otimes \pi_2,~\bar{s}):~\pi_1|det(\cdot)|^{s_1}\times \pi_2|det(\cdot)|^{s_2}\longrightarrow \pi_2|det(\cdot)|^{s_2}\times \pi_1|det(\cdot)|^{s_1},\]
and its normalized form
\[N(\sigma,~\pi_1\otimes\pi_2,~\bar{s}):=\gamma(\sigma,~\pi_1\otimes\pi_2,~\bar{s})^{-1}M(\sigma,~\pi_1\otimes \pi_2,~\bar{s}), \]
where $$\gamma(\sigma,~\pi_1\otimes\pi_2,~\bar{s}):=L(s_1-s_2,~\pi_1\times \pi_2^\vee)\cdot L(1+s_1-s_2,~\pi_1\times \pi_2^\vee)^{-1}$$ is slightly different from the factor in \cite{moeglin1989residue}. They are the same up to scalars in $\mathbb{C}[q^{-s},~q^{s}]^\star$ the set of invertible elements in $\mathbb{C}[q^{-s},~q^{s}]$. Here $(\cdot)^\vee$ means taking the contragredient.

In the paper, we consider only the case $\pi_1=\rho_c(\tau_a)$ and $\pi_2=\rho_d(\tau_b)$ with $a,~b,~c,~d\in \mathbb{Z}_+$, and $\bar{s}=(s,~-s)\in \mathbb{C}^2$ for simplicity. The case $\rho_c(\tau_a)\otimes \rho_d(\tau'_b)$ with $\tau\not\simeq \tau'$, up to twisting by a unitary unramfied charater, could be discussed easily following from \cite{silberger1980special} and will be omitted.
Denote
\[\alpha(s,~\rho_c(\tau_a),~\rho_d(\tau_b)):=\prod_{j=\frac{|c-d|}{2}}^{\frac{c+d-2}{2}}L(2s-j,~\tau_a\times \tau^\vee_b),\quad \beta(s,~\rho_c(\tau_a),~\rho_d(\tau_b)):=\prod_{j=\frac{|c-d|}{2}}^{\frac{c+d-2}{2}}L(2s+j+1,~\tau_a\times\tau_b^\vee). \]
Then an easy calculation shows that
\[\gamma(\sigma,~\rho_c(\tau_a)\otimes \rho_d(\tau_b),~\bar{s})=\prod_{j=\frac{|c-d|}{2}}^{\frac{c+d-2}{2}}\frac{L(2s-j,~\tau_a\times \tau_b^\vee)}{L(2s+j+1,~\tau_a\times\tau_b^\vee)}=\frac{\alpha(s,~\rho_c(\tau_a),~\rho_d(\tau_b))}{\beta(s,~\rho_c(\tau_a),~\rho_d(\tau_b))}. \]

\begin{mthm} \label{mthm1}
	Keep the notions as above. We have	
	\[M^*(s,~\rho_c(\tau_a)\otimes \rho_d(\tau_b)):=\frac{1}{\alpha(s,~\rho_c(\tau_a),~\rho_d(\tau_b))}M(\sigma,~\rho_c(\tau_a)\otimes \rho_d(\tau_b),~\bar{s})\]
	is holomorphic for $s\in \mathbb{C}$.
\end{mthm}
With the help of \cite[Lemma I.5]{moeglin1989residue}, we can first obtain
\begin{cor}\label{sufficient}
	Maintain the notation as before. The reducibility points of the representation \[\rho_c(\tau_a)|det(\cdot)|^s\times \rho_d(\tau_b)|det(\cdot)|^{-s}\]
	are exactly the poles of
	\[\beta(s,~\rho_c(\tau_a),~\rho_d(\tau_b))\cdot\beta(-s,~\rho_c(\tau_a),~\rho_d(\tau_b)), \]
	provided that the normalization factors $\alpha(s,~\rho_c(\tau_a), ~\rho_d(\tau_b))^{-1}$ and $\beta(s,~\rho_c(\tau_a),~ \rho_d(\tau_b))^{-1}$ are co-prime in $\mathbb{C}[q^{-s},~q^s]$.
\end{cor}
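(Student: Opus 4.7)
The plan is to combine Main Theorem~\ref{mthm1} with the reducibility criterion provided by \cite[Lemma I.5]{moeglin1989residue}, which tests reducibility of the maximal parabolic induction at $s_0$ by the failure of the normalized standard intertwining operator to be an isomorphism.

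First I would rewrite the intertwining data in terms of the entire operator supplied by Main Theorem~\ref{mthm1}: set $M^*(s) := \alpha(s)^{-1}M(\sigma, \rho_c(\tau_a)\otimes\rho_d(\tau_b), \bar{s})$, so that $M^*(s)$ is entire, $M(\sigma, \bar s) = \alpha(s)M^*(s)$, and using $\gamma = \alpha/\beta$ the normalized operator satisfies $N(\sigma, \bar s) = \beta(s) M^*(s)$. Next I would derive a Plancherel-type identity by composing with the partner intertwiner at $\sigma\bar s = (-s,s)$. The standard Knapp--Stein type functional equation reads $N(\sigma, \bar s) \circ N(\sigma, \sigma \bar s) = c_0 \cdot \mathrm{Id}$ with $c_0 \in \mathbb{C}[q^{-s}, q^s]^\star$ a unit (by Schur's lemma at generic $s$); feeding in $N = \beta M^*$, this becomes
\[ M^*(s) \circ M^*(-s) = \frac{c_0}{\beta(s)\beta(-s)}\cdot \mathrm{Id}. \]

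With this identity in hand, together with the holomorphy of $M^*$ from Main Theorem~\ref{mthm1}, \cite[Lemma I.5]{moeglin1989residue} translates reducibility at $s_0$ into the vanishing of $M^*(s_0)$ (or of its partner at $-s_0$), and the displayed identity locates this vanishing precisely at the poles of $\beta(s)\beta(-s)$. The role of the co-primality of $\alpha^{-1}$ and $\beta^{-1}$ is to ensure that the poles of $\alpha$ and of $\beta$ are disjoint, so that each pole of $\beta(s)\beta(-s)$ produces a genuine zero of $M^*(s)\circ M^*(-s)$ of the expected order -- no cancellation can occur with the $\alpha(s)\alpha(-s)$ factor arising from $\gamma=\alpha/\beta$ -- and hence corresponds bijectively to a reducibility point.

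The main obstacle will be pinning down the Knapp--Stein functional equation with a bona fide unit $c_0$, and verifying carefully that, under the co-primality hypothesis, no accidental cancellation between the normalization factors can either produce extra reducibility points or mask existing ones. The tightness of this multiplicity accounting is precisely why the ``if and only if'' in the corollary is predicated on the co-primality condition.
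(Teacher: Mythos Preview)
Your overall strategy—rewriting $N=\beta M^*$ with $M^*$ entire and invoking \cite[Lemma~I.5]{moeglin1989residue}—is the right one, and the functional equation you derive is correct (it is the identity the paper uses later in the proof of Theorem~\ref{redthm}). However, there is a genuine gap in the step where you say that ``\cite[Lemma~I.5]{moeglin1989residue} translates reducibility at $s_0$ into the vanishing of $M^*(s_0)$.'' That lemma does \emph{not} say this: it characterizes irreducibility by the \emph{holomorphy} of $N(\sigma,\bar s)$ and its partner, and since $N=\beta M^*$, a pole of $N$ at $s_0$ means precisely that the order of the pole of $\beta$ at $s_0$ exceeds the order of vanishing of $M^*$ there. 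In particular, at a reducibility point coming from a pole of $\beta$, the operator $M^*(s_0)$ is typically \emph{non-zero} (merely non-invertible), so ``vanishing of $M^*$'' is the wrong condition. Your displayed identity $M^*(s)\circ M^*(-s)=c_0/(\beta(s)\beta(-s))$ only tells you that the composite is zero at a pole of $\beta(s)\beta(-s)$; it does not by itself force $N=\beta M^*$ to have a pole there, because $M^*$ could in principle vanish to exactly the order of the pole of $\beta$.

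This is exactly where the co-primality hypothesis must do real work, and your explanation (``no cancellation with the $\alpha(s)\alpha(-s)$ factor'') does not supply the missing mechanism. The point is that $M(\sigma,\bar s)=\alpha(s)M^*(s)$ is a \emph{non-zero} meromorphic family of operators, so the order of vanishing of $M^*$ at any $s_0$ is bounded above by the order of the pole of $\alpha$ at $s_0$ (this is the observation made explicit in the proof of Corollary~\ref{corSuffi}). Under the co-primality assumption, $\alpha$ has no pole at any pole $s_0$ of $\beta$, hence $M^*(s_0)\neq 0$, and therefore $N=\beta M^*$ genuinely has a pole at $s_0$, giving reducibility. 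The paper's own proof is organized along these more direct lines: it never invokes the functional equation for this corollary, but argues straight from $N=\beta M^*$ that, under co-primality, holomorphy of $N$ at $s_0$ is equivalent to holomorphy of $\beta$ at $s_0$. Your detour through $M^*(s)\circ M^*(-s)$ is harmless for the easy direction (no pole of $\beta\beta\Rightarrow$ irreducible), but for the converse you still need the bound on the zeros of $M^*$ coming from $\alpha$, which you should state and use explicitly.
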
	
\begin{proof}
	Recall that \cite[Lemma I.5]{moeglin1989residue} says that the induced representation
	\[\rho_c(\tau_a)|det(\cdot)|^{s}\times \rho_d(\tau_b)|det(\cdot)|^{-s}\mbox{is irreducible at the point }s_0\]
	if and only if 
	\[N(\sigma,~\rho_c(\tau_a)\otimes\rho_d(\tau_b),~\bar{s})\mbox{ and } N(\sigma,~\rho_d(\tau_b)\otimes\rho_c(\tau_a),~-\bar{s}) \mbox{ are holomorphic at }s_0. \]
	Note that our Main Theorem \ref{mthm1} says that 
	\[\frac{1}{\alpha(s,~\rho_c(\tau_a),~\rho_d(\tau_b))}M(\sigma,~\rho_c(\tau_a)\otimes \rho_d(\tau_b),~\bar{s})
	\mbox{ is holomorphic for }s\in \mathbb{C}, \]
	thus the condition
	\[\alpha(s,~\rho_c(\tau_a), ~\rho_d(\tau_b))^{-1}\mbox{ and } \beta(s,~\rho_c(\tau_a),~ \rho_d(\tau_b))^{-1}\mbox{ are co-prime}\]
	implies that \[N(\sigma,~\rho_c(\tau_a)\otimes\rho_d(\tau_b),~\bar{s})=\frac{\beta(s,~\rho_c(\tau_a),~\rho_d(\tau_b))}{\alpha(s,~\rho_c(\tau_a),~\rho_d(\tau_b))}M(\sigma,~\rho_c(\tau_a)\otimes \rho_d(\tau_b),~\bar{s}) \] is holomorphic at the point $s_0$ (resp. -$s_0$) if and only if 
	\[\beta(s,~\rho_c(\tau_a),~\rho_d(\tau_b))\mbox{ is holomorphic at the point $s_0$ (resp. -$s_0$).} \]
	Whence finishing the proof.
\end{proof}	
Moreover, with an extra help of the preservation of reducibility property under the Zelevinsky--Aubert dual \cite{zelevinsky1980induced,aubert1995dualite}, i.e.,
\[\rho_c(\tau_a)|det(\cdot)|^s\times \rho_d(\tau_b)|det(\cdot)|^{-s}\mbox{\bf and }\rho_a(\tau_c)|det(\cdot)|^s\times \rho_b(\tau_d)|det(\cdot)|^{-s} \mbox{\bf share the same reducibility},\]
we can say a little bit more as follows.
\begin{cor}\label{sufficientAubert}
	Retain the notation as previous. The reducibility points of the representation \[\rho_c(\tau_a)|det(\cdot)|^s\times \rho_d(\tau_b)|det(\cdot)|^{-s}\]
	are exactly the poles of
	\[\beta(s,~\rho_c(\tau_a),~\rho_d(\tau_b))\cdot\beta(-s,~\rho_c(\tau_a),~\rho_d(\tau_b)), \]
	provided that the normalization factors $\alpha(s,~\rho_a(\tau_c), ~\rho_b(\tau_d))^{-1}$ and $\beta(s,~\rho_c(\tau_a),~ \rho_d(\tau_b))^{-1}$ are co-prime in $\mathbb{C}[q^{-s},~q^s]$.
\end{cor}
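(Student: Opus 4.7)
The plan is to mirror the proof of Corollary \ref{sufficient}, replacing the direct invocation of Main Theorem \ref{mthm1} for the pair $(\rho_c(\tau_a),\rho_d(\tau_b))$ by its application to the Zelevinsky--Aubert dual pair $(\rho_a(\tau_c),\rho_b(\tau_d))$, and then pulling the conclusion back via the invariance of reducibility under Aubert duality recalled just above. First I would apply Main Theorem \ref{mthm1} to the dual pair to obtain that $M(\sigma,\rho_a(\tau_c)\otimes\rho_b(\tau_d),\bar{s})/\alpha(s,\rho_a(\tau_c),\rho_b(\tau_d))$ is entire on $\mathbb{C}$.

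Next, invoking Lemma I.5 of \cite{moeglin1989residue} applied to the Aubert-dual induced representation $\rho_a(\tau_c)|det(\cdot)|^s\times \rho_b(\tau_d)|det(\cdot)|^{-s}$, its reducibility at $s_0$ is equivalent to the non-holomorphy of $N(\sigma,\rho_a(\tau_c)\otimes\rho_b(\tau_d),\bar{s})$ at $s_0$ or $-s_0$. Writing the normalized operator as $\bigl(\beta(s,\rho_a(\tau_c),\rho_b(\tau_d))/\alpha(s,\rho_a(\tau_c),\rho_b(\tau_d))\bigr)M(\sigma,\rho_a(\tau_c)\otimes\rho_b(\tau_d),\bar{s})$ and using the holomorphy just established, such poles are confined to the zeros of $\beta(s,\rho_a(\tau_c),\rho_b(\tau_d))^{-1}$. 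By the Aubert-duality preservation of reducibility these agree with the reducibility points of the original induced representation; combining further with Main Theorem \ref{mthm1} for the original pair (which, exactly as in Corollary \ref{sufficient}, yields that $N_{orig}$ has poles contained in those of $\beta(s,\rho_c(\tau_a),\rho_d(\tau_b))$) pins the reducibility locus inside the common refinement of the two $\beta$-pole sets.

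Finally, to promote ``contained in'' to ``exactly'', I would feed in the stated mixed coprimality: since $\alpha(s,\rho_a(\tau_c),\rho_b(\tau_d))^{-1}$ and $\beta(s,\rho_c(\tau_a),\rho_d(\tau_b))^{-1}$ share no root, no zero of $\beta(s,\rho_c(\tau_a),\rho_d(\tau_b))^{-1}$ can be absorbed by a zero of $\alpha(s,\rho_a(\tau_c),\rho_b(\tau_d))^{-1}$ inherited from the dual-side normalization, so each pole of $\beta(s,\rho_c(\tau_a),\rho_d(\tau_b))\cdot\beta(-s,\rho_c(\tau_a),\rho_d(\tau_b))$ survives as a genuine pole of $N_{orig}$ and hence as a reducibility point. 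The main obstacle in executing this cleanly is a careful book-keeping of how Aubert duality propagates pole information between normalized intertwining operators acting on \emph{a priori} different representation spaces; concretely, one must verify that a vanishing of $M^*(s,\rho_c(\tau_a)\otimes\rho_d(\tau_b))$ at a pole of $\beta(s,\rho_c(\tau_a),\rho_d(\tau_b))$ is forced to be accompanied by a zero of $\alpha(s,\rho_a(\tau_c),\rho_b(\tau_d))^{-1}$, a compatibility that the Aubert-dual version of Main Theorem \ref{mthm1} is precisely designed to deliver.
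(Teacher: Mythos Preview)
Your overall strategy—apply Main Theorem \ref{mthm1} and Corollary \ref{sufficient} to the Aubert-dual pair $(\rho_a(\tau_c),\rho_b(\tau_d))$ and then transport the conclusion back via the reducibility-preservation of Aubert duality—is exactly the paper's strategy. However, you are missing the single observation that makes the argument a one-liner: the factor $\beta$ is \emph{invariant} under the swap $(c,a,d,b)\mapsto(a,c,b,d)$, i.e.
\[
\beta(s,\rho_c(\tau_a),\rho_d(\tau_b))=\prod_{j=\frac{|c-d|}{2}}^{\frac{c+d-2}{2}}\prod_{k=\frac{|a-b|}{2}}^{\frac{a+b-2}{2}}L(2s+j+k+1,\tau\times\tau^\vee)=\beta(s,\rho_a(\tau_c),\rho_b(\tau_d)).
\]
With this in hand, the stated ``mixed'' coprimality hypothesis is literally the hypothesis of Corollary \ref{sufficient} for the dual pair, that corollary gives the reducibility locus of the dual induced representation as the poles of $\beta_{\mathrm{dual}}(s)\beta_{\mathrm{dual}}(-s)=\beta_{\mathrm{orig}}(s)\beta_{\mathrm{orig}}(-s)$, and Aubert invariance of reducibility finishes the proof immediately.

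Because you do not record this $\beta$-invariance, your argument bifurcates into two separate $\beta$-pole sets and you are then forced to argue that a possible zero of $M^*_{\mathrm{orig}}$ at a pole of $\beta_{\mathrm{orig}}$ must be ``accompanied by a zero of $\alpha_{\mathrm{dual}}^{-1}$''. That last compatibility is precisely the step you flag as an obstacle, and as written it is not justified: $\alpha_{\mathrm{dual}}$ governs $N_{\mathrm{dual}}$, not $N_{\mathrm{orig}}$, and there is no direct mechanism linking the vanishing of $M^*_{\mathrm{orig}}$ to $\alpha_{\mathrm{dual}}$ short of going through reducibility and Aubert duality—which is circular at that point. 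Once you insert the $\beta$-invariance, this entire detour (and the obstacle) disappears.
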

\begin{proof}
	As discussed earlier, the only point is to realize that the normalization factor $\beta(s,~\rho_c(\tau_a),~ \rho_d(\tau_b))$ is invariant with respect to the following symmetries
	\[(c,~d)\mapsto (d,~c),~(a,~b)\mapsto (b,~a), \mbox{ and }(c,~d)\mapsto (a,~b). \]
	Which follows easily from the calculation
	\[\beta(s,~\rho_c(\tau_a),~ \rho_d(\tau_b)):=\prod_{j=\frac{|c-d|}{2}}^{\frac{c+d-2}{2}}L(2s+j+1,~\tau_a\times\tau_b^\vee)=\prod_{j=\frac{|c-d|}{2}}^{\frac{c+d-2}{2}}\prod_{k=\frac{|a-b|}{2}}^{\frac{a+b-2}{2}}L(2s+j+k+1,~\tau\times\tau^\vee). \]
\end{proof}
Let us take a break to see what the conditions in our above corollaries say.
\begin{lem}\label{lemreducedCalcu}
	Use the same notation as defined earlier. We have  \[\alpha(s,~\rho_c(\tau_a), ~\rho_d(\tau_b))^{-1}~ (\mbox{resp. } \alpha(s,~\rho_a(\tau_c), ~\rho_b(\tau_d))^{-1})\mbox{ and } \beta(s,~\rho_c(\tau_a),~ \rho_d(\tau_b))^{-1} \mbox{ are co-prime}\]
	if and only if
	\[|c-d|\geq min\{a-1,~b-1\}~(\mbox{resp. }|a-b|\geq min\{c-1,~d-1\}). \]
\end{lem}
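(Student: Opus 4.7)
The plan is to reduce the problem to a combinatorial disjointness statement about two arithmetic progressions. First, I would apply the Jacquet--Shalika factorization
\[ L(s,~\tau_a \times \tau_b^\vee) = \prod_{k=|a-b|/2}^{(a+b-2)/2} L(s+k,~\tau \times \tau^\vee), \]
which is already implicit in the calculation at the end of the proof of Corollary \ref{sufficientAubert}. Substituting this into the definitions of $\alpha$ and $\beta$ gives
\[ \alpha(s)^{-1} = \prod_{(j,k) \in J \times K} L(2s + k - j,~\tau \times \tau^\vee)^{-1}, \qquad \beta(s)^{-1} = \prod_{(j,k) \in J \times K} L(2s + j + k + 1,~\tau \times \tau^\vee)^{-1}, \]
where $J := \{|c-d|/2,~\ldots,~(c+d-2)/2\}$ and $K := \{|a-b|/2,~\ldots,~(a+b-2)/2\}$ are arithmetic progressions of step $1$.

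Next I would use that, since $\tau$ is unitary supercuspidal, $L(s,~\tau\times \tau^\vee)^{-1}$ is a polynomial in $q^{-s}$ whose linear factors are of the form $1 - \zeta q^{-s}$ with $|\zeta|=1$. Consequently, $L(2s + m,~\tau\times \tau^\vee)^{-1}$ viewed as a Laurent polynomial in $X = q^{-s}$ has all its zeros on the circle $|X| = q^{m/2}$, and distinct rational values of $m$ correspond to disjoint circles. Two such shifted factors can therefore share a common irreducible factor in $\mathbb{C}[q^{-s},~q^s]$ only when the shifts coincide. Hence co-primality of $\alpha^{-1}$ and $\beta^{-1}$ is equivalent to the disjointness
\[ \{k - j : (j,k)\in J \times K\} \cap \{j + k + 1 : (j,k)\in J \times K\} = \emptyset. \]

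Finally, both sets are integer-spaced and, by a short parity check on their fractional parts, they lie on a common lattice. Since for positive integers $a,b,c,d$ the reverse containment $\max(j+k+1) < \min(k-j)$ is immediate to rule out, disjointness collapses to $\max(k-j) < \min(j+k+1)$, i.e.
\[ \tfrac{a+b-2}{2} - \tfrac{|c-d|}{2} \;<\; \tfrac{|a-b|}{2} + \tfrac{|c-d|}{2} + 1, \]
which rearranges, using $\tfrac{a+b-|a-b|}{2} = \min\{a,b\}$, to $|c-d| \geq \min\{a-1,~b-1\}$. The parenthetical ``resp.'' case follows by symmetry: the calculation in the proof of Corollary \ref{sufficientAubert} shows that $\beta(s,~\rho_c(\tau_a),~\rho_d(\tau_b))$ is invariant under the swap $(a,b) \leftrightarrow (c,d)$, so the resp. assertion is just the main one with these roles exchanged. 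The main obstacle is the unitarity-based reduction in the second step, which pins the zeros of each shifted $L^{-1}$-factor to a single circle; once that is secured, the rest is routine bookkeeping on arithmetic progressions.
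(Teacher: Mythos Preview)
Your proposal is correct and follows essentially the same approach as the paper: expand $\alpha^{-1}$ and $\beta^{-1}$ via the factorization $L(s,\tau_a\times\tau_b^\vee)=\prod_k L(s+k,\tau\times\tau^\vee)$, reduce co-primality to disjointness of the two shift sets, and compare the extreme endpoints of the resulting intervals. Your write-up is in fact more careful than the paper's on one point: you justify, via unitarity of $\tau$, why distinct shifts $m$ force the factors $L(2s+m,\tau\times\tau^\vee)^{-1}$ to be pairwise coprime in $\mathbb{C}[q^{-s},q^s]$, whereas the paper simply asserts the equivalence ``co-prime $\Leftrightarrow$ $\min\{j-k\}>\max\{-j-k-1\}$'' without comment; you also explicitly dispose of the other interval ordering, which the paper leaves implicit.
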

\begin{proof}
	This follows from an easy calculation as follows.
	\[\alpha(s,~\rho_c(\tau_a),~\rho_d(\tau_b)):=\prod_{j=\frac{|c-d|}{2}}^{\frac{c+d-2}{2}}L(2s-j,~\tau_a\times \tau_b^\vee)=\prod_{j=\frac{|c-d|}{2}}^{\frac{c+d-2}{2}}\prod_{k=\frac{|a-b|}{2}}^{\frac{a+b-2}{2}}L(2s-j+k,~\tau\times \tau^\vee), \]
	and
	\[\beta(s,~\rho_c(\tau_a),~ \rho_d(\tau_b)):=\prod_{j=\frac{|c-d|}{2}}^{\frac{c+d-2}{2}}L(2s+j+1,~\tau_a\times\tau_b^\vee)=\prod_{j=\frac{|c-d|}{2}}^{\frac{c+d-2}{2}}\prod_{k=\frac{|a-b|}{2}}^{\frac{a+b-2}{2}}L(2s+j+k+1,~\tau\times\tau^\vee). \]
	Thus the co-prime conditions are equivalent to saying that the minimal number in $\{j-k:~\cdots \}$ is bigger than the maximal number in $\{-j-k-1:~\cdots\}$, i.e.,
	\[\frac{|c-d|}{2}-\frac{a+b-2}{2}>-\frac{|c-d|}{2}-\frac{|a-b|}{2}-1 ~(\mbox{resp. }\frac{|a-b|}{2}-\frac{c+d-2}{2}>-\frac{|c-d|}{2}-\frac{|a-b|}{2}-1 ), \]
	that is,
	\[|c-d|\geq min\{a-1,~b-1\}~(\mbox{resp. }|a-b|\geq min\{c-1,~d-1\}). \]
\end{proof}	
Given the calculation in Lemma \ref{lemreducedCalcu}, we can easily see that two basic examples, of which the location of reducibility points seems unknown before Tadi{\'c} and Lapid--M{\'{\i}}nguez's recent work, lie in the setting of our Corollaries \ref{sufficient} and \ref{sufficientAubert} as follows.
\begin{exam}\label{exambasic}
	$\rho_c(\tau_a)|det(\cdot)|^s\times \rho_d(\tau)|det(\cdot)|^{-s}~ (i.e. ~b=1)$, and $\rho_c(\tau_a)|det(\cdot)|^s\times \tau_b|det(\cdot)|^{-s}~ (i.e.~d=1)$.	
\end{exam}
Now back to our discussion on the information of reducibility points we can extract from Main Theorem \ref{mthm1}. One may note that the conditions in Corollaries \ref{sufficient} and \ref{sufficientAubert} might be too strong, though they already imply something interesting, see Example \ref{exambasic}. Indeed, there does exist a weaker condition, arising from a simple observation, which guarantees the reducibility given in what follows. Write $$g.c.d\left(\alpha(s,~\rho_c(\tau_a),~\rho_d(\tau_b))^{-1},~\beta(s,~\rho_c(\tau_a),~\rho_d(\tau_b))^{-1} \right)$$ to be the greatest common divisor of the normalization factors $\alpha(s,\cdots)$ and $\beta(s,\cdots)$, and denote by $deg_{s_0}(\alpha(s,\cdots))$ (resp. $deg_{s_0}(\beta(s,\cdots))$) the order of pole of $\alpha(s,\cdots)$ (resp. $\beta(s,\cdots)$) at the point $s_0$. Then we have
\begin{cor}\label{corSuffi}
	Retain the notions as defined before. The induced representation \[\rho_c(\tau_a)|det(\cdot)|^s\times \rho_d(\tau_b)|det(\cdot)|^{-s}\]
	is reducible at those points $\pm s_0$, with $s_0<0$, characterized by one of the conditions
	\begin{enumerate}[(i).]
		\item $g.c.d\left(\alpha(s,~\rho_c(\tau_a),~\rho_d(\tau_b))^{-1},~\beta(s,~\rho_c(\tau_a),~\rho_d(\tau_b))^{-1} \right)|_{s=s_0}=1$, up to a non-zero scalar.
		\item $g.c.d\left(\alpha(s,~\rho_a(\tau_c),~\rho_b(\tau_d))^{-1},~\beta(s,~\rho_c(\tau_a),~\rho_d(\tau_b))^{-1} \right)|_{s=s_0}=1$, up to a non-zero scalar.	
		\item If (i) does not hold, $deg_{s_0}\left(\beta(s,~\rho_c(\tau_a),~\rho_d(\tau_b))\right)>deg_{s_0}\left(\alpha(s,~\rho_c(\tau_a),~\rho_d(\tau_b))\right)$.
		\item If (ii) does not hold, $deg_{s_0}\left(\beta(s,~\rho_c(\tau_a),~\rho_d(\tau_b))\right)>deg_{s_0}\left(\alpha(s,~\rho_a(\tau_c),~\rho_b(\tau_d))\right).$
	\end{enumerate}
\end{cor}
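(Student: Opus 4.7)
The plan is to combine Main Theorem \ref{mthm1} with Lemma I.5 of M{\oe}glin--Waldspurger (recalled in the proof of Corollary \ref{sufficient}), writing the normalized intertwining operator as $N(\sigma,\rho_c(\tau_a)\otimes\rho_d(\tau_b),\bar{s}) = \beta(s)\cdot M^*(s)$ with $M^*$ entire, and then verifying non-holomorphicity of $N$ (or of its swap) at $\pm s_0$ case by case, using Zelevinsky--Aubert duality to reduce conditions (ii), (iv) to (i), (iii).

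First I would set $l := \deg_{s_0}(\alpha(s,\rho_c(\tau_a),\rho_d(\tau_b)))$, $k := \deg_{s_0}(\beta(s,\rho_c(\tau_a),\rho_d(\tau_b)))$, and let $m$ be the operator-order of vanishing of $M^*$ at $s_0$, so that $M^*(s) = (s-s_0)^m M^*_m + O((s-s_0)^{m+1})$ with $M^*_m \neq 0$. Since $M^*$ is entire by Main Theorem \ref{mthm1}, the operator $N$ has a pole of order $k - m$ at $s_0$, and Lemma I.5 delivers reducibility at $\pm s_0$ whenever $k > m$. The task is therefore to leverage each hypothesis to establish $k > m$.

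Conditions (i) and (iii) together give the weaker inequality $k > l$: under (i) a pole of $\beta$ at $s_0$ cannot share an irreducible factor with a pole of $\alpha$, which forces $l = 0$; under (iii) one has $l > 0$ but $k > l$ by stipulation. I would then run a dichotomy driven by $M = \alpha\cdot M^*$, whose Laurent leading exponent at $s_0$ is $m - l$. In the case $m \leq l$, one has $m \leq l < k$ and $N$ has a pole directly. In the case $m > l$, the standard operator $M$ vanishes at $s_0$, and the inverse-relation $\tilde N(s)\circ N(s) = I$ for normalized intertwining operators forces the swap $\tilde N = N(\sigma,\rho_d(\tau_b)\otimes\rho_c(\tau_a),-\bar{s})$ to acquire a pole at $s_0$, again invoking Lemma I.5 to conclude. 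Conditions (ii) and (iv) are then handled by the Zelevinsky--Aubert duality of Corollary \ref{sufficientAubert}: the problem transfers to the dual pair $(\rho_a(\tau_c),\rho_b(\tau_d))$, for which (ii) and (iv) become (i) and (iii).

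The main obstacle I expect lies in the subcase $m > l$: rigorously translating operator-vanishing of $M$ at $s_0$ into non-holomorphicity of the swap $\tilde N$ via the composition identity requires careful bookkeeping of the scalar structure of $\tilde N\circ N$, and is conceptually a local incarnation of Conjecture \ref{conjnonzero}. A cleaner route would establish the uniform bound $m \leq l$ (equivalently, $M^*(s_0)\neq 0$ at points where $\alpha$ is finite), bypassing the dichotomy altogether.
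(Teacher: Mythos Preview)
Your setup is correct and matches the paper's: write $N = \beta \cdot M^*$ with $M^*$ entire by Main Theorem~\ref{mthm1}, so $N$ has a pole of order $k - m$ at $s_0$, and the task reduces to $k > m$. Your reduction of (ii), (iv) to (i), (iii) via Zelevinsky--Aubert duality is also how the paper proceeds (through Corollary~\ref{sufficientAubert}).

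The gap is exactly where you place it: the subcase $m > l$. Your proposed fix via the identity $\tilde N \circ N = \mathrm{id}$ does not go through, because this is an operator identity, not a scalar one. If $N$ has leading term $(s-s_0)^{m-k} N_0$ with $N_0$ a non-zero operator, you cannot conclude that $\tilde N$ acquires a pole of order $k - m$: the leading term of $\tilde N$ could annihilate the image of $N_0$ without $\tilde N$ itself being singular. This is the same obstruction that makes Conjecture~\ref{conjnonzero} non-trivial, so appealing to the functional equation here is essentially circular.

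The paper simply takes your ``cleaner route'': the bound $m \leq l$ holds unconditionally at every $s_0$. This is \emph{not} Conjecture~\ref{conjnonzero} (which would assert $m = 0$); it is the strictly weaker statement that the non-normalized operator $M = \alpha \cdot M^*$ never vanishes once its poles are removed, a general property of standard intertwining operators that the paper cites from Waldspurger~\cite{waldspurger2003formule}. With $m \leq l$ in hand your dichotomy collapses: under either (i) or (iii) one has $k > l \geq m$, so $N$ has a genuine pole at $s_0$ and Lemma~I.5 of \cite{moeglin1989residue} gives reducibility.

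One small correction: your parenthetical ``equivalently, $M^*(s_0)\neq 0$ at points where $\alpha$ is finite'' is only the $l = 0$ instance of $m \leq l$, not an equivalent reformulation; the full bound at points with $l > 0$ is precisely what case (iii) requires.
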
	
\begin{proof}
	The former two conditions have been discussed in Corollaries \ref{sufficient} and \ref{sufficientAubert} respectively. For the latter parts, they are easy by-products of the following simple observation, (see \cite{waldspurger2003formule}),
	\[\mbox{\bf The order of zero of }M^*(s,~\rho_c(\tau_a)\otimes \rho_d(\tau_b))\mbox{ at }s_0\leq deg_{s_0}\left(\alpha(s,~\rho_c(\tau_a),~\rho_d(\tau_b))\right). \] 
\end{proof}
Let us take a look at two extremely bad cases, in terms of our conditions, to see what we can get from Corollary \ref{corSuffi}.
\begin{exam}\label{exambad1}
	$\rho_c(\tau_c)|det(\cdot)|^s\times \rho_c(\tau_c)|det(\cdot)|^{-s}~(i.e.~c=d=a=b)$: we have \[\alpha(s,\cdots)=\prod_{j_1=0}^{c-1}L(2s-j_1,~\tau_c\times \tau_c^\vee)=\prod_{j_1=0}^{c-1}\prod_{j_2=0}^{c-1}L(2s-j_1+j_2,~\tau\times \tau^\vee),\]
	and
	\[\beta(s,\cdots)=\prod_{j_1=0}^{c-1}L(2s+j_1+1,~\tau_c\times \tau_c^\vee)=\prod_{j_1=0}^{c-1}\prod_{j_2=0}^{c-1}L(2s+j_1+j_2+1,~\tau\times \tau^\vee). \]
	Thus the poles of $\alpha(s,\cdots)$ and $\beta(s,\cdots)$ counted with multiplicity are listed in terms of $2s$ by the following matrices respectively, red marked are their common poles, 
	\[ \left[\begin{array}{ccccc}
	{\color{red}-(c-1)} & {\color{red}\cdots}&{\color{red}\cdots} & {\color{red} -1} & 0 \\ 
	{\color{red}\vdots} & &{\color{red}\udots} & &\vdots\\
	{\color{red}\vdots} & {\color{red}\udots} & & & \vdots \\ 
	{\color{red}-1} & & &  & \vdots \\ 
	0 & \cdots &\cdots &\cdots  & (c-1)
	\end{array} \right]_{\alpha(s,\cdots),}\qquad 
	\left[\begin{array}{ccccc}
	-2(c-1)-1 &\cdots&\cdots & \cdots & -(c-1)-1 \\ 
	\vdots & & & &{\color{red} -(c-1)}\\
	\vdots &  & &{\color{red}\udots} & {\color{red}\vdots} \\ 
	\vdots & &{\color{red}\udots} &  & {\color{red}\vdots} \\ 
	-(c-1)-1 &	{\color{red}-(c-1)} &{\color{red}\cdots} &{\color{red}\cdots}  & {\color{red} -1}
	\end{array} \right]_{\beta(s,\cdots).} \]
	Therefor, by Corollary \ref{corSuffi}, we see that all poles of $\beta(s,~\rho_c(\tau_c),~\rho_c(\tau_c))\cdot\beta(-s,~\rho_c(\tau_c),~\rho_c(\tau_c))$ except the following points 
	\[2s\in\left\{\pm 1,~\pm 2,~\cdots,~\pm \left\lfloor\frac{c}{2} \right\rfloor\right\} \]
	are known to be reducible points of our induced representation.
\end{exam}	
\begin{exam}\label{exambad2}
	$\rho_c(\tau_a)|det(\cdot)|^s\times \rho_c(\tau_a)|det(\cdot)|^{-s}~(i.e.~c=d\neq  a=b)$: we have \[\alpha(s,~\rho_c(\tau_a),~\rho_c(\tau_a))=\prod_{j_1=0}^{c-1}L(2s-j_1,~\tau_a\times \tau_a^\vee)=\prod_{j_1=0}^{c-1}\prod_{j_2=0}^{a-1}L(2s-j_1+j_2,~\tau\times \tau^\vee),\]
	and 
	\[\beta(s,\cdots)=\prod_{j_1=0}^{c-1}L(2s+j_1+1,~\tau_a\times \tau_a^\vee)=\prod_{j_1=0}^{c-1}\prod_{j_2=0}^{a-1}L(2s+j_1+j_2+1,~\tau\times \tau^\vee). \]
	Following from taking the Zelevinsky--Aubert dual as in Corollary \ref{sufficientAubert}, we can assume $c>a$, then by Corollary \ref{corSuffi} and a similar calculation as done in Example \ref{exambad1}, we have that all poles of $\beta(s,~\rho_c(\tau_c),~\rho_c(\tau_c))\cdot\beta(-s,~\rho_c(\tau_c),~\rho_c(\tau_c))$ except the following points 
	\[2s\in \left\{\pm 1,~\pm 2,~\cdots,~\pm \left\lfloor\frac{a}{2} \right\rfloor\right\} \]
	are known to be reducible points of our induced representation.
\end{exam}
\begin{rem}\label{liescondi}
	Given our Main Theorem \ref{mthm1}, with the help of \cite[Lemma I.5]{moeglin1989residue} and the functional equation, up to an invertible scalar, 
	\[N(\sigma,~\rho_c(\tau_a)\otimes\rho_d(\tau_b),~\bar{s})\circ N(\sigma,~\rho_d(\tau_b)\otimes\rho_c(\tau_a),~-\bar{s})=id. \]
	one could readily see that the location of reducibility points is governed by the poles of 
	\[\beta(s,~\rho_c(\tau_a),~\rho_d(\tau_b))\cdot\beta(-s,~\rho_c(\tau_a),~\rho_d(\tau_b)), \]
	which is exactly the condition what M{\oe}glin--Waldspurger's notion of ``li{\'e}s'' says in \cite[Lemma I.6.3]{moeglin1989residue}. Moreover, our Main Theorem \ref{mthm1} could be proved in an another way without the profound Langlands--Shahidi theory utilized heavily in \cite{moeglin1989residue}, i.e., following from our new argument illustrated in \cite{luo2021casselmanshahidiconjecture}, and will be discussed as a necessary ingredient for analogous results for classical  groups in \cite{luo2021holomorphicityClassicalGroup}. In view of this, one could readily see that similar results hold for the general linear groups over division algebras if we could prove \cite[Lemma I.5 (i)]{moeglin1989residue} in this setting using the tool of Jacquet module directly, which we leave for a future work.	
\end{rem}
Based on the above analysis, especially the observation Remark \ref{liescondi}, we are curious to see if our approach in the spirit of M{\oe}glin--Waldspurger could reach the point that it answers the reducibility problem completely, even though the problem has been settled recently by Tadi{\'c} and Lapid--M{\'{\i}}nguez in \cite{tadic2014irreducibility,lapidminguez2016Innerforms} via a detailed analysis of Jacquet modules in a combinatorial way. Excited to say that, a conjectural strong version of Main Theorem \ref{mthm1} can achieve this in what follows. 
\begin{conj}\label{conjnonzero}
	Use the same notation as in Main Theorem \ref{mthm1}. We have
	\[M^*(s,~\rho_c(\tau_a)\otimes \rho_d(\tau_b)):=\frac{1}{\alpha(s,~\rho_c(\tau_a),~\rho_d(\tau_b))}M(\sigma,~\rho_c(\tau_a)\otimes \rho_d(\tau_b),~\bar{s}) \]
	is always non-zero for $s\in \mathbb{C}$, i.e., a non-zero intertwining operator.
\end{conj}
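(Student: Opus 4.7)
The plan is to establish non-vanishing of $M^*(s,~\rho_c(\tau_a)\otimes \rho_d(\tau_b))$ by evaluating it against a distinguished functional for which the pairing with the unnormalized operator $M(\sigma,~\rho_c(\tau_a)\otimes \rho_d(\tau_b),~\bar{s})$ can be computed explicitly, and then checking that the explicit formula cancels exactly the factor $\alpha(s,~\rho_c(\tau_a),~\rho_d(\tau_b))$ without leaving any additional zero. Since Speh representations are generically non-generic, the classical Shahidi Whittaker theory is unavailable; one should invoke instead the $(k,~c)$-model functionals produced by Cai--Friedberg--Ginzburg--Kaplan in \cite{cai2018doubling,cai2019doubling}, which by construction do not vanish on $\rho_c(\tau_a)$.

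First I would attach to each Speh representation $\rho_c(\tau_a)$ the unique (up to scalar) $(k,~c)$-functional $\Lambda_c^{\tau_a}$. Uniqueness forces the composition of $M(\sigma,~\rho_c(\tau_a)\otimes \rho_d(\tau_b),~\bar{s})$ with the target functional $\Lambda_d^{\tau_b}\otimes \Lambda_c^{\tau_a}$ to equal a scalar multiple of the source functional $\Lambda_c^{\tau_a}\otimes \Lambda_d^{\tau_b}$; call this scalar the $(k,~c)$-local coefficient $C(s,~\rho_c(\tau_a),~\rho_d(\tau_b))$. Next I would compute $C(s,~\cdots)$ via the generalized doubling unfolding, identifying it (up to an invertible element of $\mathbb{C}[q^{-s},~q^s]^\star$) with $\gamma(\sigma,~\rho_c(\tau_a)\otimes \rho_d(\tau_b),~\bar{s}) = \alpha(s,~\cdots)/\beta(s,~\cdots)$. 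This is the step where the CFGK framework replaces Shahidi's local coefficient theory in the parallel argument of M{\oe}glin--Waldspurger.

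Once the identification $C(s,~\cdots) \doteq \alpha(s,~\cdots)/\beta(s,~\cdots)$ is established, the conclusion is nearly automatic: for a test vector $v_s$ whose image under $\Lambda_c^{\tau_a}\otimes \Lambda_d^{\tau_b}$ is a non-zero polynomial in $q^{\pm s}$, the $(k,~c)$-pairing of $M^*(s,~\rho_c(\tau_a)\otimes \rho_d(\tau_b))v_s$ with $\Lambda_d^{\tau_b}\otimes \Lambda_c^{\tau_a}$ equals $\alpha(s,~\cdots)^{-1}C(s,~\cdots)$ times a non-vanishing term, and by the explicit formula this quantity has no zeros in $s\in\mathbb{C}$. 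Combined with Main Theorem \ref{mthm1}, this shows that $M^*(s,~\rho_c(\tau_a)\otimes \rho_d(\tau_b))$ is a non-zero intertwining operator at every $s\in\mathbb{C}$.

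The hard part will be carrying out Step 2 in full generality. The CFGK local coefficient theory is, at present, established in cases sufficient to yield Proposition \ref{conjspecial}, but a precise computation of $C(s,~\rho_c(\tau_a),~\rho_d(\tau_b))$ for arbitrary quadruples $(a,~b,~c,~d)$ requires extending the generalized doubling unfolding beyond its current scope and, crucially, verifying that no spurious poles or zeros appear in the intermediate unramified integral. This is genuinely new ground: one must control the interaction of two distinct $(k,~c)$-models of possibly different shapes, and the main potential failure mode is the presence of a non-trivial common factor between the source-side and target-side local pairings that would obstruct the clean identification with $\gamma(\sigma,~\cdots)$. Overcoming this obstruction, presumably via a careful Jacquet-module asymptotic analysis of $(k,~c)$-functionals in the spirit of \cite{luo2021casselmanshahidiconjecture}, is where I expect the decisive work to lie.
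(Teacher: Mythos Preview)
This statement is a \emph{conjecture} in the paper, not a theorem: the paper does not prove it in general, only the special case $c=d$ (Proposition~\ref{conjspecial}). So your proposal should be read as a strategy, and indeed your final paragraph correctly flags that the general case would require genuinely new input.

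For the case $c=d$ your plan is essentially the paper's, but two points in your write-up would not go through as stated. First, the CFGK local coefficient is not $\gamma(\sigma,\cdots)=\alpha(s,\cdots)/\beta(s,\cdots)$ but rather, up to a unit,
\[
C_\psi(s,\cdots)=\frac{\beta(-s,~\rho_c(\tau_a),~\rho_c(\tau_b))}{\alpha(s,~\rho_c(\tau_a),~\rho_c(\tau_b))},
\]
so that $\lambda(s,\cdots)=\beta(-s,\cdots)\,\lambda(-s,\cdots)\circ M^*(s,\cdots)$. Second, and more seriously, this identity does \emph{not} give non-vanishing of $M^*$ for all $s\in\mathbb{C}$ in one stroke: since $\beta(-s,\cdots)$ has poles for $Re(s)>0$, the equation only forces $M^*(s,\cdots)\neq 0$ on the half-plane $Re(s)\le 0$. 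The paper covers $Re(s)\ge 0$ by an entirely different mechanism, namely the holomorphy and non-vanishing of the Langlands--Shahidi normalized operator $N(\sigma,\cdots)$ there (\cite[Lemma~I.5]{moeglin1989residue}) together with the fact that $\beta(s,\cdots)$ has no poles in that region. Your Step~3 conflates these two halves.

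For $c\neq d$ the obstruction you name is exactly the reason the paper leaves the conjecture open. The CFGK $(k,c)$-functional is attached to a single degenerate Whittaker type determined by $c$; there is no known uniqueness statement, and hence no local-coefficient formalism, for a mixed functional of the shape $\Lambda_c^{\tau_a}\otimes\Lambda_d^{\tau_b}$ on $\rho_c(\tau_a)|det|^{s}\times\rho_d(\tau_b)|det|^{-s}$. So Step~1 already fails before one reaches the computation in Step~2: it is not merely a question of controlling spurious factors in an unramified integral, but of producing the functional and its multiplicity-one property in the first place. Until that is available, the route you sketch does not get off the ground outside $c=d$.
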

\begin{thm}[Tadi{\'c}, Lapid--M{\'{\i}}nguez \cite{tadic2014irreducibility,lapidminguez2016Innerforms}]\label{redthm}
	Corollary \ref{sufficient} holds unconditionally.
\end{thm}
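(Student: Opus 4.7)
The plan is to bypass the analytic machinery of intertwining operators entirely and argue combinatorially via Jacquet modules in the Zelevinsky picture. First I would realize each Speh representation $\rho_c(\tau_a)$ as a ``ladder representation'' in the sense of Lapid--M{\'{\i}}nguez: it is the Langlands quotient $L(\mathfrak{m})$ attached to the multisegment $\mathfrak{m}=\{\Delta_1,\ldots,\Delta_c\}$ of uniformly shifted Zelevinsky segments supported on the rectangle of cuspidal exponents of $\tau$ with sides $c$ and $a$. Under the twist $|det(\cdot)|^{\pm s}$ these rectangles slide rigidly along the real line, so the reducibility question becomes whether the product of two such shifted ladder-type multisegments remains irreducible.

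Next I would invoke the irreducibility criterion for products of ladder representations (Tadi{\'c}, Lapid--M{\'{\i}}nguez), which is the combinatorial substitute for Conjecture \ref{conjnonzero} in the author's analytic route. The criterion asserts that $L(\mathfrak{m}_1)\times L(\mathfrak{m}_2)$ is irreducible if and only if no segment of $\mathfrak{m}_1$ is ``linked'' with any segment of $\mathfrak{m}_2$ in a precise asymmetric sense; equivalently, $L(\mathfrak{m}_1)\times L(\mathfrak{m}_2)\cong L(\mathfrak{m}_2)\times L(\mathfrak{m}_1)$. The proof of this criterion proceeds via Tadi{\'c}'s formula for the semisimplification of Jacquet modules of parabolic inductions, combined with an induction on $|\mathfrak{m}_1|+|\mathfrak{m}_2|$ which, whenever a linkage occurs, isolates an explicit irreducible subquotient appearing in one ordering but not the other.

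Once the criterion is available, deducing Theorem \ref{redthm} becomes pure bookkeeping on the two shifted rectangles. One expands $\beta(s,~\rho_c(\tau_a),~\rho_d(\tau_b))\cdot\beta(-s,~\rho_c(\tau_a),~\rho_d(\tau_b))$ as in Lemma \ref{lemreducedCalcu} into a double product indexed by $\frac{|c-d|}{2}\le j\le\frac{c+d-2}{2}$ and $\frac{|a-b|}{2}\le k\le\frac{a+b-2}{2}$, and then checks that each pole of this product corresponds bijectively to a pair of segments, one drawn from each shifted rectangle, that become linked at that value of $s_0$. The symmetry $(c,d)\leftrightarrow(a,b)$ already observed in the proof of Corollary \ref{sufficientAubert} matches on the combinatorial side the Zelevinsky--Aubert involution exchanging rows and columns of the rectangle, which is what allows the combined $\alpha$--$\alpha$ and $\beta$--$\beta$ bookkeeping to close up.

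The main obstacle, and the reason the theorem goes strictly beyond Corollaries \ref{sufficient} and \ref{sufficientAubert}, is the absence of any co-primeness hypothesis: when $\alpha$ and $\beta$ share a pole, one cannot rely on the normalized operator to detect reducibility. So the combinatorial argument must produce, whenever a linkage occurs, an explicit proper subquotient of one ordering that fails to appear in the other (a lower-bound statement), and conversely must rule out any hidden reducibility coming from deeper Jacquet modules when no linkage occurs (an upper-bound statement). This two-sided combinatorial analysis is the technical heart of \cite{tadic2014irreducibility,lapidminguez2016Innerforms}; once it is granted, the translation into the $\beta$-factor formulation of Corollary \ref{sufficient} is routine, which is exactly what makes Theorem \ref{redthm} hold unconditionally.
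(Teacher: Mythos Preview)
Your proposal is sound in outline and follows the route that Tadi{\'c} and Lapid--M{\'{\i}}nguez themselves take, but this is genuinely different from the argument the paper gives. The paper does not reproduce any Jacquet-module combinatorics; instead it deduces Theorem \ref{redthm} from Conjecture \ref{conjnonzero} via the functional equation
\[
M^*(s,\,\rho_c(\tau_a)\otimes\rho_d(\tau_b))\circ M^*(-s,\,\rho_d(\tau_b)\otimes\rho_c(\tau_a))=\frac{1}{\beta(s,\cdots)\beta(-s,\cdots)}
\]
(up to units): if the right-hand side vanishes at $s_0$ while both $M^*(\pm s_0,\cdots)$ are nonzero by the conjecture, then a composite of nonzero operators is zero, forcing reducibility at $s_0$. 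The converse direction (reducibility only at poles of $\beta(s)\beta(-s)$) is Remark \ref{liescondi}. So the paper's argument is two lines long and purely analytic, but it is conditional on Conjecture \ref{conjnonzero}, which the paper only establishes for $c=d$ (Proposition \ref{conjspecial}); your combinatorial route is precisely what supplies the unconditional statement.

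One caveat on your sketch: the ladder irreducibility criterion is not ``no segment of $\mathfrak{m}_1$ linked with any segment of $\mathfrak{m}_2$,'' even with your asymmetry hedge. Already $\rho_2(\tau)\times\rho_2(\tau)$ is irreducible although the segment $[\tau|det|^{1/2}]$ from one factor is linked with $[\tau|det|^{-1/2}]$ from the other. The actual Lapid--M{\'{\i}}nguez condition involves their LI/RI combinatorics (matching-type conditions across the two ladders, not mere pairwise linkage), and it is this finer condition that you must verify translates exactly into the pole set of $\beta(s,\cdots)\beta(-s,\cdots)$. Your final bookkeeping paragraph is the right idea, but be prepared for the dictionary to be between the rectangle overlap pattern and the LI/RI conditions rather than naive segment linkage.
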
  
\begin{proof}
	 As seen in Remark \ref{liescondi}, reducibility points are govern by the poles of 
	 \[\beta(s,~\rho_c(\tau_a),~\rho_d(\tau_b))\cdot\beta(-s,~\rho_c(\tau_a),~\rho_d(\tau_b)). \]
	 Hence it suffices to show the converse also holds, i.e., poles of $\beta(s,~\cdots)\beta(-s,~\cdots)$ are controlled by the reducibility points of
	 \[\rho_c(\tau_a)|det(\cdot)|^s\times \rho_d(\tau_b)|det(\cdot)|^{-s}. \]
	 This follows easily from Conjecture \ref{conjnonzero} via the following functional equation, up to invertible elements,
	 \[M^*(s,~\rho_c(\tau_a)\otimes \rho_d(\tau_b))\circ M^*(-s,~ \rho_d(\tau_b)\otimes\rho_c(\tau_a))=\frac{1}{\beta(s,~\rho_c(\tau_a),~\rho_d(\tau_b))\cdot\beta(-s,~\rho_c(\tau_a),~\rho_d(\tau_b))}. \]
	 To be precise, if the right hand side is zero at some point $s_0$, then the left hand side is also zero at $s_0$. But Conjecture \ref{conjnonzero} says that $M^*(\pm s_0,~\cdots)\neq 0$, so the induced representation must be reducible at $s_0$. Whence Theorem \ref{redthm} holds. 
\end{proof}
Regarding Conjecture \ref{conjnonzero}, via Cai--Friedberg--Ginzburg--Kaplan's local coefficient theory of $(k,~c)$-model in \cite{cai2018doubling,cai2019doubling}, we have
\begin{prop}\label{conjspecial}
	Keep the notions as above. Conjecture \ref{conjnonzero} holds for the case $c=d$.
\end{prop}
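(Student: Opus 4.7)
The plan is to adapt Shahidi's local-coefficient approach to the $(k,c)$-model setting of Cai--Friedberg--Ginzburg--Kaplan, exploiting the crucial fact that when $c=d$ the two Speh representations $\rho_c(\tau_a)$ and $\rho_c(\tau_b)$ share the same Speh parameter $c$, and therefore are both modeled by functionals with respect to the \emph{same} $(k,c)$-degenerate Whittaker-type character.

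First, I would invoke CFGK's uniqueness theorem to fix, for each $e\in\{a,b\}$, a non-zero $(et,c)$-functional $\Lambda_{\tau_e,c}$ on $\rho_c(\tau_e)$. From these I would construct, by the generalized doubling integral recipe, a meromorphic family of $((a+b)t,c)$-functionals $\Lambda_s$ on the induced representation
\[\Pi(s):=\rho_c(\tau_a)|det(\cdot)|^s\times\rho_c(\tau_b)|det(\cdot)|^{-s},\]
and similarly a family $\Lambda'_s$ on the target $\sigma\Pi(s)$ of $M(\sigma,\rho_c(\tau_a)\otimes\rho_c(\tau_b),\bar{s})$. For $s$ in general position, $\Pi(s)$ is irreducible and its $(k,c)$-functional is unique up to scalar; consequently, meromorphic continuation yields a local functional equation
\[\Lambda'_s\circ M(\sigma,\rho_c(\tau_a)\otimes\rho_c(\tau_b),\bar{s})=C(s)\cdot\Lambda_s\]
for a single meromorphic local coefficient $C(s)$.

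Second, I would compute $C(s)$ by unfolding the left-hand side in the spirit of CFGK's generalized doubling method, with the intertwining operator inserted. The expectation is that the unfolded integral collapses into the product of $L$-factors defining $\alpha(s,\rho_c(\tau_a),\rho_c(\tau_b))$, up to an invertible element $u(s)\in\mathbb{C}[q^{-s},q^s]^{\times}$; in other words, $C(s)=\alpha(s,\rho_c(\tau_a),\rho_c(\tau_b))\cdot u(s)$. Dividing by $\alpha(s,\cdots)$ then gives
\[\Lambda'_s\circ M^*(s,\rho_c(\tau_a)\otimes\rho_c(\tau_b))=u(s)\cdot\Lambda_s,\]
which is nowhere vanishing since $u(s)$ is invertible and $\Lambda_s\neq 0$. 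This forces $M^*(s,\rho_c(\tau_a)\otimes\rho_c(\tau_b))\neq 0$ as an operator for every $s\in\mathbb{C}$, as desired.

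The main obstacle will be the computation in the second step: explicitly unfolding the intertwining-operator insertion in the CFGK integral and verifying that the resulting product of local $L$-factors matches $\alpha(s,\rho_c(\tau_a),\rho_c(\tau_b))$ precisely, with the remaining scalar an invertible element of $\mathbb{C}[q^{-s},q^s]$. One also needs to check that $\Lambda_s$ can be chosen so that it is non-zero at every specialization of $s$, not merely generically, which involves a careful choice of test data. The restriction $c=d$ is essential: only then does the parabolic induction sit inside the $(k,c)$-model framework in the symmetric way that makes the doubling unfolding produce exactly the factor $\alpha(s,\cdots)$; for $c\neq d$ the two Speh representations would require different $c$-parameters, and no single family $\Lambda_s$ links them in this manner.
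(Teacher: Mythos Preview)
Your overall strategy of using the CFGK $(k,c)$-model and a local-coefficient identity is the same as the paper's, but your computation of the local coefficient is wrong in a way that breaks the argument. The CFGK local coefficient is a $\gamma$-type object, i.e.\ a \emph{ratio} of $L$-values, not a single product: up to invertible elements one has
\[
C_\psi(s,\cdots)=\frac{\beta(-s,\rho_c(\tau_a),\rho_c(\tau_b))}{\alpha(s,\rho_c(\tau_a),\rho_c(\tau_b))},
\]
so in your notation $C(s)=\alpha(s,\cdots)\cdot\beta(-s,\cdots)^{-1}\cdot u(s)$, not $\alpha(s,\cdots)\cdot u(s)$. Dividing by $\alpha$ therefore yields
\[
\Lambda'_s\circ M^*(s,\cdots)=\frac{u(s)}{\beta(-s,\cdots)}\,\Lambda_s,
\]
and the right-hand side \emph{does} vanish at the poles of $\beta(-s,\cdots)$, all of which lie in $Re(s)>0$. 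Hence the $(k,c)$-model argument only yields $M^*(s,\cdots)\neq 0$ for $Re(s)\le 0$, not for all $s$ as you claim.

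The missing half is supplied in the paper by an entirely separate ingredient: M{\oe}glin--Waldspurger's \cite[Lemma I.5]{moeglin1989residue} shows that the \emph{normalized} operator $N(\sigma,\rho_c(\tau_a)\otimes\rho_d(\tau_b),\bar{s})=\beta(s,\cdots)M^*(s,\cdots)$ is holomorphic and nonzero for $Re(s)\ge 0$; since $\beta(s,\cdots)$ has no poles in that region, this forces $M^*(s,\cdots)\neq 0$ for $Re(s)\ge 0$ (and this part does not even require $c=d$). Combining the two regions gives the full statement. Your proposal needs both the corrected form of $C(s)$ and this second argument to close the gap.
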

\begin{proof}
	Recall that Cai--Friedberg--Ginzburg--Kaplan's local coefficient theory of $(k,~c)$-model, given by the following diagram, please refer to \cite{cai2018doubling,cai2019doubling} for the notation and results in details,
	\[ \xymatrix{ \rho_c(\tau_a)|det(\cdot)|^s\times \rho_c(\tau_b)|det(\cdot)|^{-s}\quad\ar[rr]^{M(\sigma,\rho_c(\tau_a)\otimes\rho_c(\tau_b),\bar{s})} \ar[dr]_{\lambda(s,\cdots)} & & \quad\rho_c(\tau_b)|det(\cdot)|^{-s}\times\rho_c(\tau_a)|det(\cdot)|^s \ar[dl]^{\lambda(-s,\cdots)} \\
		&	\mathbb{C}_{\psi} &
	}\]
	says that there exists a rational coefficient $C_\psi(s,~\cdots)$ given by, up to an invertible element in $\mathbb{C}[q^{-s},q^s]$,
	\[C_\psi(s,~\cdots)=\frac{\beta(-s,~\rho_c(\tau_a),~\rho_c(\tau_b))}{\alpha(s,~\rho_c(\tau_a),~\rho_c(\tau_b))} \]
	such that, up to an invertible element in $\mathbb{C}[q^{-s},q^s]$,
	\[\lambda(s,~\cdots)=C_\psi(s,~\cdots)\lambda(-s,~\cdots)\circ M(\sigma,~\rho_c(\tau_a)\otimes\rho_c(\tau_b),~\bar{s})=\beta(-s,~\cdots)\lambda(-s,~\cdots)\circ M^*(s,~\rho_c(\tau_a)\otimes \rho_c(\tau_b)). \]
	Note that $\lambda(s,~\cdots)$ is holomorphic in $s$ and non-zero (see \cite[P. 15]{cai2019doubling}), thus we obtain
	\[M^*(s,~\rho_c(\tau_a)\otimes \rho_c(\tau_b))\neq 0\mbox{ for }Re(s)\leq 0, \]
	which follows from the fact that $\beta(-s,~\cdots)$ has no poles at $Re(s)\leq 0$.
	
	On the other hand, the top diagram in Page 615 in \cite[I.5 Lemma]{moeglin1989residue} says that \[N(\sigma,~\rho_c(\tau_a)\otimes \rho_d(\tau_b),~\bar{s})=\frac{\beta(s,~\rho_c(\tau_a),~\rho_d(\tau_b))}{\alpha(s,~\rho_c(\tau_a),~\rho_d(\tau_b))}M(\sigma,~\rho_c(\tau_a)\otimes \rho_d(\tau_b),~\bar{s})\]
	is holomorphic and nonzero for $Re(s)\geq 0$. Note that \[\beta(s,~\rho_c(\tau_a),~\rho_d(\tau_b)):=\prod_{j=\frac{|c-d|}{2}}^{\frac{c+d-2}{2}}L(2s+j+1,~\tau_a\times\tau_b^\vee)\] 
	has no poles for $Re(s)\geq 0$, thus we know
	\[M^*(s,~\rho_c(\tau_a)\otimes \rho_d(\tau_b)):=\frac{1}{\alpha(s,~\rho_c(\tau_a),~\rho_d(\tau_b))}M(\sigma,~\rho_c(\tau_a)\otimes \rho_d(\tau_b),~\bar{s}) \]
	is always non-zero for $Re(s)\geq 0$.
	Whence $M^*(s,~\rho_c(\tau_a)\otimes \rho_c(\tau_b))$ is always non-zero for $s\in \mathbb{C}$.
\end{proof}
\begin{rem}\label{localeffic}
	One may notice that the local coefficient theory of $(k,~c)$-model for general linear groups $GL$ has not been written down in detail in \cite{cai2018doubling}. But one also sees that the unramified calculation in some sense has been carried out in \cite[P.63 equation (6.57)]{cai2018doubling} and the multiplicative property is quite standard as in \cite[Proposition 3.2.1]{shahidi1981certain}, thus the formula used in the above proof, i.e.,
	\[C_\psi(s,~\cdots)=\frac{\beta(-s,~\rho_c(\tau_a),~\rho_c(\tau_b))}{\alpha(s,~\rho_c(\tau_a),~\rho_c(\tau_b))} \]
	follows easily from the fact that $C_\psi(s,~\cdots)=1$ globally, see \cite[P.65]{cai2018doubling}, via a local-global argument. As this is not so related with our focus in the paper, we do not include the details herein.
\end{rem}

\section{proof of main theorem \ref{mthm1}}
In this section, we will follow M{\oe}glin--Waldspurger's work \cite{moeglin1989residue} to prove our Main Theorem \ref{mthm1} for $\tau_a$ and $\tau_b$ supercuspidal, and discrete series gradually. Please refer to \cite[Section IA]{moeglin1989residue} for the details.

\begin{proof}[Proof of Main Theorem \ref{mthm1} ($\tau_a$ and $\tau_b$ supercuspidal)]
	Note that \cite[Proposition I.10]{moeglin1989residue} and \cite[Lemma I.6.3]{moeglin1989residue} say that $N(\sigma,~\rho_c(\tau)\otimes \rho_d(\tau),~\bar{s})$ is holomorphic and non-zero for $2Re(s)>-1$, with possible poles at the reducible points of $\rho_c(\tau)|det(\cdot)|^{s}\times \rho_d(\tau)|det(\cdot)|^{-s}$, i.e., 
	\[2Re(s)\in \left[-\frac{c+d}{2},~-\frac{|c-d|}{2}\right)\cap \mathbb{Z}.\]
	Thus it is equivalent to showing that, via a simple analysis of the normalization factors $\alpha(s,~\rho_c(\tau),~\rho_d(\tau))$ and $\beta(s,~\rho_c(\tau),~\rho_d(\tau))$,
	\[M(\sigma,~\rho_c(\tau)\otimes \rho_d(\tau),~\bar{s})\mbox{ is holomorphic for }2Re(s)< -\frac{|c-d|}{2}. \tag{$\star$} \] 
	Which is equivalent to showing that 
	\[N(\sigma,~\rho_c(\tau)\otimes \rho_d(\tau),~\bar{s})\mbox{ has only simple poles possibly at }2Re(s)\in \left[-\frac{c+d}{2},~-\frac{|c-d|}{2}\right)\cap \mathbb{Z}.\tag*{(C1)} \]
	Viewing $\rho_d(\tau)$ as a subrepresentation of $\tau|det(\cdot)|^{-\frac{d-1}{2}}\times\tau|det(\cdot)|^{-\frac{d-3}{2}}\times \cdots\times\tau|det(\cdot)|^{\frac{d-1}{2}}$, we have $$N(\sigma,~\rho_c(\tau)\otimes \rho_d(\tau),~\bar{s})=\prod_{j_2=-\frac{d-1}{2}}^{\frac{d-1}{2}}N(\sigma,~\rho_c(\tau)\otimes \tau|det(\cdot)|^{j_2},~\bar{s}) \mbox{ (from large to small)}.$$
	For each $N(\sigma,~\rho_c(\tau)\otimes \tau|det(\cdot)|^{j_2},~\bar{s})$, applying \cite[Proposition I.10]{moeglin1989residue} and \cite[Lemma I.6.3]{moeglin1989residue} again, we know that it has possible poles only at 
	\[2Re(s)=-\frac{c+1}{2}+j_2\in \mathbb{Z}.\] 
	On one hand, $$M(\sigma,~\rho_c(\tau)\otimes \tau|det(\cdot)|^{j_2},~\bar{s})=\prod_{j_1=-\frac{c-1}{2}}^{\frac{c-1}{2}}M(\sigma,~\tau|det(\cdot)|^{j_1}\otimes \tau|det(\cdot)|^{j_2},~\bar{s}) \mbox{ (from small to large)}$$ via a reduced decomposition, and $M(\sigma,~\tau|det(\cdot)|^{j_1}\otimes \tau|det(\cdot)|^{j_2},~\bar{s})$ has only a simple pole possibly at (see \cite{bernstein1977induced})
	\[2Re(s)=j_2-j_1.\]
	On the other hand, an easy calculation of the normalization factor gives
	\[\gamma(\sigma,~\rho_c(\tau)\otimes \tau|det(\cdot)|^{j_2},~\bar{s})=\frac{L(2s-1-j_2-\frac{c-1}{2},~\tau\times \tau^\vee)}{L(2s-1-j_2+\frac{c-1}{2}+1,~\tau\times \tau^\vee)} \]
	Thus the inequality $2Re(s)=-\frac{c-1}{2}-1+j_2<j_2-j_1$ for any $j_1$ implies that
	\[N(\sigma,~\rho_c(\tau)\otimes \tau|det(\cdot)|^{j_2},~\bar{s})\mbox{ has only a simple pole possibly at } 2Re(s)=-\frac{c+1}{2}+j_2.  \] 
	Therefore, the inequality $-\frac{c+1}{2}+j_2\neq -\frac{c+1}{2}+j'_2$ if $j_2\neq j'_2$ implies that
	\[N(\sigma,~\rho_c(\tau)\otimes \rho_d(\tau),~\bar{s})\mbox{ has only simple poles possibly at }2Re(s)\in \left[-\frac{c+d}{2},~-\frac{|c-d|}{2}\right)\cap \mathbb{Z}. \]
	Whence finishing the Claim (C1).
\end{proof}

\begin{proof}[Proof of Main Theorem \ref{mthm1} ($\tau_a$ and $\tau_b$ discrete series)]
	Recall that $\tau_b$ is the unique subrepresentation of $\tau|det(\cdot)|^\frac{b-1}{2}\times \cdots\times \tau|det(\cdot)|^{-\frac{b-1}{2}}$. The key observation for our argument in what follows is that
	\[\rho_c(\tau_b)\simeq S(\rho_c(\tau)|det(\cdot)|^{\frac{b-1}{2}}\times\cdots\times\rho_c(\tau)|det(\cdot)|^{-\frac{b-1}{2}})\mbox{ (the unique subrepresentation)}. \]
	This follows from the fact that they share the same Zelevinsky--Aubert dual (see \cite{moeglinwaldspurger1986involution}). In view of this observation, viewing $\rho_c(\tau_b)$ as a subrepresentation of $$\rho_c(\tau)|det(\cdot)|^{\frac{b-1}{2}}\times\cdots\times\rho_c(\tau)|det(\cdot)|^{-\frac{b-1}{2}},$$
	we have the decomposition
	\[N(\sigma,~\rho_c(\tau_a)\otimes\rho_d(\tau_b),~\bar{s})=\prod_{j_2=-\frac{b-1}{2}}^{\frac{b-1}{2}}N(\sigma,~\rho_c(\tau_a)\otimes\rho_d(\tau)|det(\cdot)|^{j_2},~\bar{s})\mbox{ (from large to small)}.  \]
	For each $N(\sigma,~\rho_c(\tau_a)\otimes\rho_d(\tau)|det(\cdot)|^{j_2},~\bar{s})$, \cite[Proposition I.10]{moeglin1989residue} and \cite[Lemma I.6.3]{moeglin1989residue} imply that it has possible poles at
	\[2Re(s)\in \left[j_2-\frac{a-1}{2}-\frac{c+d}{2},~j_2-\frac{a-1}{2}-\frac{|c-d|}{2}\right)\cap \mathbb{Z}. \]
	On the other hand, 
	\[M(\sigma,~\rho_c(\tau_a)\otimes\rho_d(\tau)|det(\cdot)|^{j_2},~\bar{s})=\prod_{j_1=-\frac{a-1}{2}}^{\frac{a-1}{2}}M(\sigma,~\rho_c(\tau)|det(\cdot)|^{j_1}\otimes\rho_d(\tau)|det(\cdot)|^{j_2},~\bar{s}) \mbox{ (from small to large)}. \]
	Note that, by ($\star$), we know that $M(\sigma,~\rho_c(\tau)|det(\cdot)|^{j_1}\otimes\rho_d(\tau)|det(\cdot)|^{j_2},~\bar{s})$ has possible poles only at
	\[2Re(s)+j_1-j_2\geq -\frac{|c-d|}{2},~i.e.,~2Re(s)\in \left[j_2-j_1-\frac{|c-d|}{2},+\infty\right)\cap \mathbb{Z}. \]
	Thus the inequality $j_2-\frac{a-1}{2}\leq j_2-j_1$ says that for any $j_1$,
	\[j_2-\frac{a-1}{2}-\frac{|c-d|}{2}\leq j_2-j_1-\frac{|c-d|}{2}. \]
	Which in turn implies that, for $N(\sigma,~\rho_c(\tau_a)\otimes\rho_d(\tau)|det(\cdot)|^{j_2},~\bar{s})$,
	\[\mbox{all of its possible poles come from the normalization factor }\gamma(\sigma,~\rho_c(\tau_a)\otimes\rho_d(\tau)|det(\cdot)|^{j_2},~\bar{s}).\tag{$\star\star$}\]
	Therefore our Main Theorem \ref{mthm1} holds for the inducing data $\rho_c(\tau_a)\otimes\rho_d(\tau)$, as well as $\rho_c(\tau)\otimes \rho_d(\tau_b)$ which could be proved similarly and the detail will be left to the reader.
	
	Back to the general case, by an easy calculation, for $a\geq b$, we know that the normalization factors  $\alpha(\cdots)$ match on both sides of the decomposition
	\[N(\sigma,~\rho_c(\tau_a)\otimes \rho_d(\tau_b),~\bar{s})=\prod_{j_2=-\frac{b-1}{2}}^{\frac{b-1}{2}}N(\sigma,~\rho_c(\tau_a)\otimes \rho_d(\tau)|det(\cdot)|^{j_2},~\bar{s}) \mbox{ (from large to small)}.\]
	The case $a\leq b$ can be analyzed similarly as above by decomposing with respect to $\rho_c(\tau_a)$ instead of $\rho_d(\tau_b)$, and will be left as an exercise for the interested readers. Whence \[M^*(s,~\rho_c(\tau_a)\otimes \rho_d(\tau_b)):=\frac{1}{\alpha(s,~\rho_c(\tau_a),~\rho_d(\tau_b))}M(\sigma,~\rho_c(\tau_a)\otimes \rho_d(\tau_b),~\bar{s})\]
	is holomorphic for $s\in \mathbb{C}$, completing the proof of Main Theorem \ref{mthm1}.
\end{proof}

\paragraph*{\textbf{Acknowledgments}} The author would like to thank Eyal Kaplan for his kindness and help. Thanks are also due to the referee for his/her detailed comments. The author was supported by the ISRAEL SCIENCE FOUNDATION Grant Number 376/21.
			
\bibliographystyle{amsalpha}
\bibliography{ref}

%\printbibliography
			
\end{document}